\newtheorem{theorem}{Theorem}[section]
\newtheorem{lemma}[theorem]{Lemma}
\newtheorem{proposition}[theorem]{Proposition}
\newtheorem{corollary}[theorem]{Corollary}
\theoremstyle{definition}
\theoremstyle{remark}
\newtheorem{remark}[theorem]{Remark}
\numberwithin{equation}{section}
\begin{document}

\title[Orthogonally $a$-Jensen mappings on $C^*$-modules]
{Orthogonally $a$-Jensen mappings on $C^*$-modules}

\author[A. Zamani]{Ali Zamani}

\address{Department of Mathematics, Farhangian University, Tehran, Iran}
\email{zamani.ali85@yahoo.com}

\subjclass[2010]{46L05, 47B49, 39B55.}

\keywords{Orthogonality preserving mapping; Orthogonally $a$-Jensen mapping;
Additive mapping; Hilbert $C^*$-module.}

\begin{abstract}
We investigate the representation of the so-called orthogonally
$a$-Jensen mappings acting on $C^*$-modules.
More precisely, let $\mathfrak{A}$ be a unital $C^*$-algebra with the unit $1$,
let $a \in \mathfrak{A}$ be fixed such that $a, 1-a$ are invertible
and let $\mathscr{E}, \mathscr{F}, \mathscr{G}$ be inner product $\mathfrak{A}$-modules.
We prove that if there exist additive mappings $\varphi, \psi$
from $\mathscr{F}$ into $\mathscr{E}$ such that
$\big\langle \varphi(y), \psi(z)\big\rangle=0$ and
$a \big\langle \varphi(y), \varphi(z)\big\rangle a^\ast
= (1 - a)\big\langle \psi(y), \psi(z)\big\rangle (1 - a)^\ast$ for all $y, z\in \mathscr{F}$,
then a mapping $f: \mathscr{E} \to \mathscr{G}$ is orthogonally $a$-Jensen
if and only if it is of the form $f(x) = A(x) + B(x, x) +f(0)$
for $x\in \mathscr{K} := \varphi(\mathscr{F})+\psi(\mathscr{F})$,
where $A: \mathscr{E} \to \mathscr{G}$ is an $a$-additive mapping on $\mathscr{K}$
and $B$ is a symmetric $a$-biadditive orthogonality preserving mapping
on $\mathscr{K}\times \mathscr{K}$.
Some other related results are also presented.
\end{abstract} \maketitle

%%%%%%%%%%%%%%%%%%%%%%%%%%%%%%%%%%%%%%%%%%%%%%%%%%%%%%%%%%
\section{{\rm \bf Introduction}}
Orthogonally functionals on an inner product space when the orthogonality
is the ordinary one have been considered by Pinsker \cite{pi}.
Next Sundaresan \cite{Su} generalized the result of Pinsker to arbitrary
Banach spaces equipped with the Birkhoff--James orthogonality.
In the recent decades, mappings satisfying a functional equation under
some orthogonality conditions have been investigated by several mathematicians,
who have presented many interesting results and applications,
see, e. g., \cite{C.L.Z, C.L.W, F.M.P, I.T.Y, M.Z, P.P.V, Ratz}.

Jensen \cite{J} first studied the functions satisfying the condition
$f(\frac{x + y}{2}) =\frac{f(x) + f(y)}{2}$. It is easy to see that every continuous
Jensen function on $\mathbb{C}$ is affine in the sense that $f-f(0)$ is additive.
The Jensen functional equation has been extensively studied from many aspects
by many mathematicians, see, e.g., \cite{Ng, S.K} and the references therein.

Let us recall some definitions and introduce our notation. An inner product module
over a $C^{*}$-algebra $\mathfrak{A}$ is a (right) $\mathfrak{A}$-module $\mathscr{E}$
equipped with an $\mathfrak{A}$-valued inner product $\langle\cdot,\cdot\rangle$, which
is $\mathbb{C}$-linear and $\mathfrak{A}$-linear in the second variable and has the properties
$\langle x, y\rangle^*=\langle y, x\rangle$ as well as $\langle x, x\rangle \geq 0$ with equality
if and only if $x = 0$. An inner product $\mathfrak{A}$-module $\mathscr{E}$ is called a Hilbert
$\mathfrak{A}$-module if it is complete with respect to the norm $\|x\|=\|\langle x, x\rangle\|^{\frac{1}{2}}$.

Although inner product $C^{*}$-modules generalize inner product spaces by allowing
inner products to take values in an arbitrary $C^{*}$-algebra instead of the
$C^{*}$-algebra of complex numbers, but some fundamental properties of inner product
spaces are no longer valid in inner product $C^{*}$-modules. For example,
not each closed submodule of an inner product $C^{*}$-module is complemented.
Therefore, when we are studying in inner product $C^{*}$-modules,
it is always of some interest to find conditions to obtain the results analogous
to those for inner product spaces. We refer the reader to \cite{Man} for more
information on the theory $C^*$-algebras and the structure of Hilbert $C^{*}$-modules.

Let $\mathscr{E}$ and $\mathscr{F}$ be two inner product $\mathfrak{A}$-modules.
A morphism between inner product $\mathfrak{A}$-modules $\mathscr{E}$ and
$\mathscr{F}$ is a mapping $\varphi:\mathscr{E}\longrightarrow \mathscr{F}$
satisfying $\langle \varphi(x), \varphi(y)\rangle=\langle x, y\rangle$ for all $x, y\in \mathscr{E}$.
A mapping $t:\mathscr{E}\longrightarrow \mathscr{F}$ is called adjointable
if there exists a mapping $s:\mathscr{F}\longrightarrow \mathscr{E}$ such that
$\langle tx, y\rangle=\langle x, sy\rangle$ for all $x\in \mathscr{E}, y\in \mathscr{F}$.
The unique mapping $s$ is denoted by $t^*$ and is called the adjoint of $t$.
Furthermore, inner product $\mathfrak{A}$-modules $\mathscr{E}$ and $\mathscr{F}$
are unitarily equivalent ( and we write $\mathscr{E}\thicksim \mathscr{F}$)
if there exists an adjointable mapping $u:\mathscr{E}\longrightarrow \mathscr{F}$
such that $u^* u=id_\mathscr{E}$ and $uu^*=id_\mathscr{F}$.
A closed submodule $\mathscr{G}$ of an inner product $\mathfrak{A}$-module $\mathscr{E}$
is said to be orthogonally complemented if $\mathscr{G}\oplus \mathscr{G}^\perp=\mathscr{E}$,
where $\mathscr{G}^\perp=\{x\in \mathscr{E}:\,\langle x, y\rangle=0 \,\,\mbox{for all}\,\,y\in \mathscr{G}\}$.
A closed submodule $\mathscr{K}$ of an inner product $\mathfrak{A}$-module $\mathscr{E}$ is said to be
fully complemented if $\mathscr{K}$ is orthogonally complemented and $\mathscr{K}^\perp\thicksim \mathscr{E}$.
Note that the theory of inner product $C^{*}$-modules is quite different from that of inner product spaces.
For example, not any closed submodule of an inner product $C^{*}$-module is complemented
and there might exist bounded $\mathfrak{A}$-linear operators that are not adjointable.

Throughout the paper let $\mathfrak{A}$ be a unital $C^*$-algebra with the unit $1$ and let
$\mathscr{E}, \mathscr{F}, \mathscr{G}$ be inner product $\mathfrak{A}$-modules.
We fix an element $a \in \mathfrak{A}$ such that $a, 1 - a$ are invertible.
For instance, $a$ can be an element of $\mathfrak{A}$ satisfying $0<a<1$,
where the order $c<d$ in $\mathfrak{A}$ means that $c, d$ are self-adjoint
and the spectrum of $d-c$ is contained in $(m, \infty]$ for some positive number $m$.
An additive mapping $A :\mathscr{E}\longrightarrow \mathscr{G}$
is called $a$-additive if $A(ax)=a A(x)$ for all $x\in \mathscr{E}$.
A biadditive mapping $B: \mathscr{E} \times \mathscr{E}\to \mathscr{G}$
is called $a$-biadditive if $B(ax, ax) = a B(x, x)$ and $B\big((1-a)x, (1-a)x\big) = (1-a) B(x, x)$
for all $x\in \mathscr{E}$.
It is symmetric if $B(x, y) = B(y, x)$ for all $x, y\in \mathscr{E}$.
Furthermore, $B$ is said to be orthogonality preserving if for all $x, y\in \mathscr{E}$,
$$\langle x, y\rangle=0\,\Longrightarrow\,B(x, y)=0.$$
A mapping $Q:\mathscr{E}\longrightarrow \mathscr{G}$ is said to be quadratic
if it satisfies the so-called quadratic functional equation
$$Q(x+y) + Q(x-y) = 2Q(x) + 2Q(y) \qquad (x, y\in \mathscr{E}).$$
Clearly any biadditive mapping is quadratic. A mapping $f: \mathscr{E} \to \mathscr{G}$
is called orthogonally $a$-Jensen if
\begin{align}\label{id.200}
\langle x, y\rangle=0\,\Longrightarrow\,f\big(ax + (1 - a)y\big) = a f(x) + (1 - a) f(y) \qquad (x, y\in \mathscr{E}).
\end{align}
In particular if $p\in(0, 1)$, with $a = p1$ the mapping $f$
satisfying (\ref{id.200}) is said to be orthogonally $p$-Jensen.
Further if $p=\frac{1}{2}$ we say that $f$ is orthogonally Jensen.

In this paper, we investigate the representation of the so-called orthogonally
$a$-Jensen mappings acting on inner product $C^*$-modules.
More precisely, we prove that if there exist additive mappings
$\varphi, \psi$ from $\mathscr{F}$ into $\mathscr{E}$ such that $\big\langle \varphi(y), \psi(z)\big\rangle=0$
and $a \big\langle \varphi(y), \varphi(z)\big\rangle a^\ast = (1 - a)\big\langle \psi(y), \psi(z)\big\rangle (1 - a)^\ast$
for all $y, z\in \mathscr{F}$, then a mapping $f: \mathscr{E} \to \mathscr{G}$ is orthogonally $a$-Jensen
if and only if it is of the form $f(x) = A(x) + B(x, x) +f(0)$ for
$x\in \mathscr{K} := \varphi(\mathscr{F})+\psi(\mathscr{F})$,
where $A: \mathscr{E} \to \mathscr{G}$ is an $a$-additive mapping on $\mathscr{K}$
and $B$ is a symmetric $a$-biadditive orthogonality preserving mapping on $\mathscr{K}\times \mathscr{K}$.
In addition, we show that if $\mathscr{F}$ is a fully complemented submodule of $\mathscr{E}$
and $f$ is orthogonally Jensen, then $f$ is of the form $f(x) = A(x)+f(0)$ for $x\in \mathscr{F}$.
%%%%%%%%%%%%%%%%%%%%%%%%%%%%%%%%%%%%%%%%%%%%%%%%%%%%%%%%%%
\section{{\rm \bf Main results}}
We start our work with the following lemmas.
The first lemma follows immediately from (\ref{id.200}).
\begin{lemma}\label{lm.201}
If $f:\mathscr{E}\longrightarrow \mathscr{F}$ is orthogonally $a$-Jensen, then
\begin{itemize}
\item[(i)] $a f( a^{-1}x) + (1 - a) f(0) = f(x)$
\item[(ii)] $a f(0) + (1 - a) f\big( (1 - a)^{-1}x\big) = f(x)$
\item[(iii)] $f( a^{-1}x) + a^{-1}(1 - a) f(0) = a^{-1} f(x)$
\item[(iv)] $(1 - a)^{-1}a f(0) + f\big( (1 - a)^{-1}x\big) = (1 - a)^{-1} f(x)$
\item[(v)] $(1 - a)^{-1}a f(x) + f(0) = (1 - a)^{-1} f(ax)$
\item[(vi)] $f(0) + a^{-1}(1 - a) f(x) = a^{-1} f\big((1 - a)x\big)$
\end{itemize}
for every $x \in \mathscr{E}$.
\end{lemma}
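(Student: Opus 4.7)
The strategy is to observe that the trivial orthogonality $\langle x,0\rangle=0=\langle 0,y\rangle$ lets us specialize the defining implication (\ref{id.200}) in two particularly useful ways, and then all six identities are purely algebraic manipulations (substitution and left multiplication by $a^{-1}$ or $(1-a)^{-1}$, which are available because $a$ and $1-a$ are invertible).

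First I would extract the two "seed" identities. Taking $y=0$ in (\ref{id.200}) (legal since $\langle x,0\rangle=0$ for every $x$) gives
\[
f(ax) \;=\; a f(x) + (1-a) f(0) \qquad (x\in \mathscr{E}),
\]
and taking $x=0$ gives
\[
f\bigl((1-a)y\bigr) \;=\; a f(0) + (1-a) f(y) \qquad (y\in \mathscr{E}).
\]
These are precisely the equations (v) and (vi) of the lemma after dividing by $(1-a)$ on the left in the first, respectively by $a$ on the left in the second; so (v) and (vi) follow at once, and incidentally the two seed identities themselves already have the flavor of statements (v) and (vi) before multiplication.

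Next I would obtain (i) and (ii) from the same two seed equations by substitution. Replacing $x$ by $a^{-1}x$ in the first seed identity yields $f(x)=a f(a^{-1}x)+(1-a)f(0)$, which is (i); replacing $y$ by $(1-a)^{-1}x$ in the second yields $f(x)=af(0)+(1-a)f((1-a)^{-1}x)$, which is (ii). Finally (iii) and (iv) are obtained from (i) and (ii) by solving for $f(a^{-1}x)$ and $f((1-a)^{-1}x)$ respectively and then left-multiplying by $a^{-1}$ or $(1-a)^{-1}$.

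There is no real obstacle here: the whole argument reduces to the two observations that $0$ is orthogonal to everything and that $a,1-a$ are invertible in $\mathfrak{A}$, so all six statements are equivalent reformulations of the pair of seed identities obtained by specializing (\ref{id.200}). The only thing to keep track of is that the inner product is $\mathfrak{A}$-linear in the second variable, so left multiplication by elements of $\mathfrak{A}$ (rather than right) is the correct operation to perform on the $\mathscr{G}$-valued identities.
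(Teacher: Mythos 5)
Your proof is correct and is exactly the argument the paper intends: the paper offers no written proof, stating only that the lemma ``follows immediately from (\ref{id.200})'', and the immediate derivation is precisely your specialization $y=0$ and $x=0$ (using $\langle x,0\rangle=\langle 0,y\rangle=0$) followed by the substitutions $x\mapsto a^{-1}x$, $x\mapsto (1-a)^{-1}x$ and left multiplication by $a^{-1}$ or $(1-a)^{-1}$. No gaps.
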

%%%%%%%%%%%%%%%%%%%%%%%%%%%%%%%%%%%%%%%%%%
\begin{lemma}\label{lm.202}
Suppose that there exist additive mappings $\varphi, \psi:\mathscr{F}\longrightarrow \mathscr{E}$
such that $\big\langle \varphi(z), \psi(w)\big\rangle=0$ and
$a \big\langle \varphi(z), \varphi(w)\big\rangle a^\ast = (1 - a) \big\langle \psi(z), \psi(w)\big\rangle (1 - a)^\ast$
for all $z, w\in \mathscr{F}$. If $f:\mathscr{E}\longrightarrow \mathscr{G}$
is orthogonally $a$-Jensen, then
\begin{align*}
a f\big(\varphi(x) + \varphi(y)\big) &+ (1 - a) f\big(\psi(x) -\psi(y)\big)
\\&= a \Big[f\big(\varphi(x)\big) + a^{-1}(1 - a) f\big(\psi(x)\big) - (1 - a)a^{-1} f(0)\Big]
\\&\hspace{2cm} + (1 - a) \Big[(1 - a)^{-1}a f\big(\varphi(y)\big) - (1 - a)^{-1}a f(0) + f\big(\psi(-y)\big)\Big].
\end{align*}
for every $x, y\in \mathscr{F}$.
\end{lemma}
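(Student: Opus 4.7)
The plan is to introduce $X := a\varphi(x) + (1-a)\psi(x)$ and $Y := a\varphi(y) + (1-a)\psi(-y)$ and to prove the lemma by showing that both sides of the displayed identity reduce, respectively, to $f(X+Y)$ and to $f(X) + f(Y) - f(0)$, and that these two expressions are in turn equal; the reductions and the final identification will all be consequences of repeated applications of the orthogonally $a$-Jensen condition \eqref{id.200} together with Lemma~\ref{lm.201}.

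For the left-hand side, the additivity of $\varphi$ and $\psi$ combined with the first hypothesis $\langle\varphi(z),\psi(w)\rangle=0$ yields $\langle \varphi(x)+\varphi(y),\,\psi(x)-\psi(y)\rangle = 0$, whence \eqref{id.200} applied to this pair gives $a f(\varphi(x)+\varphi(y)) + (1-a) f(\psi(x)-\psi(y)) = f(X+Y)$. For the right-hand side, I would first use that $a,\,1-a,\, a^{-1},\,(1-a)^{-1}$ all commute in $\mathfrak A$ to simplify the first bracket multiplied by $a$ into $a f(\varphi(x)) + (1-a) f(\psi(x)) - (1-a) f(0)$ and, analogously, the second bracket multiplied by $1-a$ into $a f(\varphi(y)) + (1-a) f(\psi(-y)) - a f(0)$. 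Then, since $\langle\varphi(x),\psi(x)\rangle = 0$ and $\langle\varphi(y),\psi(-y)\rangle = 0$, two applications of \eqref{id.200} identify $a f(\varphi(x)) + (1-a) f(\psi(x)) = f(X)$ and $a f(\varphi(y)) + (1-a) f(\psi(-y)) = f(Y)$, so the right-hand side becomes $f(X) + f(Y) - f(0)$.

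It remains to prove $f(X+Y) = f(X) + f(Y) - f(0)$. I would first verify that $\langle X, Y\rangle = 0$ by bilinear expansion: the two $\varphi$--$\psi$ cross terms vanish by the first hypothesis, while the remaining two combine to $a\langle\varphi(x),\varphi(y)\rangle a^{*} - (1-a)\langle\psi(x),\psi(y)\rangle(1-a)^{*} = 0$ by the second hypothesis. Since this orthogonality is preserved after acting on $X$ and $Y$ by $a^{-1}$ and $(1-a)^{-1}$ respectively, \eqref{id.200} applied to the pair $(a^{-1}X,\,(1-a)^{-1}Y)$ yields
\[f(X+Y) = a f(a^{-1}X) + (1-a) f\big((1-a)^{-1}Y\big),\]
and parts (i)--(ii) of Lemma~\ref{lm.201} rewrite the two summands as $f(X) - (1-a) f(0)$ and $f(Y) - a f(0)$, respectively, which completes the reduction.

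The main conceptual point is the orthogonality $\langle X, Y\rangle = 0$: this is exactly what the second hypothesis on $\varphi$ and $\psi$ is engineered to supply, by forcing the twisted Gram quantities $a\langle\varphi(\cdot),\varphi(\cdot)\rangle a^{*}$ and $(1-a)\langle\psi(\cdot),\psi(\cdot)\rangle(1-a)^{*}$ to coincide. Once this has been established, the rest of the argument is simply a careful choreography of four applications of \eqref{id.200} and two of Lemma~\ref{lm.201}, together with the commutativity of the unital subalgebra generated by $a$.
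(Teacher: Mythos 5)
Your proof is correct, and its engine is the same as the paper's: the crucial fact is the orthogonality of the recombined elements $X = a\varphi(x)+(1-a)\psi(x)$ and $Y = a\varphi(y)+(1-a)\psi(-y)$, and the paper's identity (\ref{id.202.1}) is precisely $\big\langle a^{-1}X,(1-a)^{-1}Y\big\rangle=0$, i.e.\ the same computation up to the invertible factors $a^{-1}$ and $(1-a)^{-1}$. Where you differ is in the bookkeeping. The paper runs a one-directional chain from the left-hand side all the way to the stated right-hand side, which forces it to invoke all six parts of Lemma~\ref{lm.201}; you instead meet in the middle, showing that the left side equals $f(X+Y)$, that the right side collapses to $f(X)+f(Y)-f(0)$ after multiplying the brackets through (legitimate, since $a$, $1-a$, $a^{-1}$, $(1-a)^{-1}$ pairwise commute), and that these agree by one application of (\ref{id.200}) to the orthogonal pair $\big(a^{-1}X,(1-a)^{-1}Y\big)$ followed by Lemma~\ref{lm.201}\,(i)--(ii). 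This reorganization is a genuine gain in transparency: it exhibits the lemma as the statement $f(X+Y)=f(X)+f(Y)-f(0)$ for the orthogonal pair $X,Y$, and it dispenses with parts (iii)--(vi) of Lemma~\ref{lm.201} entirely. All the orthogonality checks you rely on --- $\big\langle \varphi(x+y),\psi(x-y)\big\rangle=0$, $\big\langle \varphi(x),\psi(x)\big\rangle=0$, $\big\langle \varphi(y),\psi(-y)\big\rangle=0$, and $\langle X,Y\rangle=0$ via cancellation of $a\big\langle\varphi(x),\varphi(y)\big\rangle a^{*}$ against $(1-a)\big\langle\psi(x),\psi(y)\big\rangle(1-a)^{*}$ --- follow correctly from the two hypotheses, so there is no gap.
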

\begin{proof}
We have
\begin{align}\label{id.202.1}
\Big\langle \varphi(x) &+ a^{-1}(1 - a)\psi(x), (1 - a)^{-1}a \varphi(y) - \psi(y)\Big\rangle\nonumber
\\&= \big\langle \varphi(x), \varphi(y)\big\rangle \big((1 - a)^{-1}a\big)^\ast - \big\langle \varphi(x), \psi(y)\big\rangle \nonumber
\\&\hspace{2.5cm} + a^{-1}(1 - a) \big\langle \psi(x), \varphi(y)\big\rangle \big((1 - a)^{-1}a\big)^\ast
- a^{-1}(1 - a) \big\langle \psi(x) , \psi(y) \big\rangle \nonumber
\\&= \big\langle \varphi(x), \varphi(y)\big\rangle \big((1 - a)^{-1}a\big)^\ast
- a^{-1}(1 - a) \big\langle \psi(x) , \psi(y) \big\rangle \nonumber
\\&\hspace{3cm}\Big(\mbox{since $\big\langle \varphi(x), \psi(y)\big\rangle=0$ and
$\big\langle \psi(x), \varphi(y)\big\rangle = \big\langle \varphi(y), \psi(x)\big\rangle^* = 0$}\Big) \nonumber
\\&= \big\langle \varphi(x), \varphi(y)\big\rangle \big((1 - a)^{-1}a\big)^\ast
- a^{-1}(1 - a) \Big[(1 - a)^{-1}a \big\langle \varphi(x), \varphi(y)\big\rangle \big((1 - a)^{-1}a\big)^\ast\Big] \nonumber
\\&\hspace{4.5cm}\Big(\mbox{since
$(1 - a) \big\langle \psi(x), \psi(y)\big\rangle (1 - a)^\ast = a \big\langle \varphi(x), \varphi(y)\big\rangle a^\ast$}\Big) \nonumber
\\&=0
\end{align}
for every $x, y\in \mathscr{F}$. Therefore, we arrive at
\begin{align*}
a f\big(\varphi(x) &+ \varphi(y)\big) + (1 - a) f\big(\psi(x)-\psi(y)\big)
\\&= a f\big(\varphi(x+y)\big) + (1 - a) f\big(\psi(x-y)\big)
\\&= f\Big(a \varphi(x+y) + (1 - a) \psi(x-y)\Big)
\\&\hspace{2.7cm}\Big(\mbox{since $\big\langle \varphi(x+y), \psi(x-y)\big\rangle=0$ and $f$ is orthogonally $a$-Jensen}\Big)
\\&= f\Big(a\big[\varphi(x) + a^{-1}(1 - a)\psi(x)\big] + (1 - a)\big[(1 - a)^{-1}a \varphi(y) - \psi(y)\big]\Big)
\\&= a f\Big(\varphi(x) + a^{-1}(1 - a)\psi(x)\Big) + (1 - a) f\Big((1 - a)^{-1}a \varphi(y) - \psi(y)\Big)
\\&\hspace{5cm}\Big(\mbox{since $f$ is orthogonally $a$-Jensen and (\ref{id.202.1}) holds}\Big)
\\&= a f\Big(a\big[a^{-1}\varphi(x)\big] + (1 - a)\big[(1 - a)^{-1}a^{-1}(1 - a)\psi(x)\big]\Big)
\\&\hspace{2cm}+ (1 - a) f\Big(a \big[a^{-1}(1 - a)^{-1}a\varphi(y)\big] + (1 - a)\big[-(1 - a)^{-1} \psi(y)\big]\Big)
\\&= a\Big[a f\Big(a^{-1}\varphi(x)\Big) + (1 - a) f\Big((1 - a)^{-1}a^{-1}(1 - a)\psi(x)\Big)\Big]
\\&\hspace{2cm}+ (1 - a)\Big[a f\Big(a^{-1}(1 - a)^{-1}a\varphi(y)\Big) + (1 - a) f\Big(-(1 - a)^{-1} \psi(y)\Big)\Big]
\\&\hspace{1cm}\Big(\mbox{since $\big\langle a^{-1}\varphi(x), (1 - a)^{-1}a^{-1}(1 - a)\psi(x)\big\rangle = 0$},
\\&\hspace{5cm} \big\langle a^{-1}(1 - a)^{-1}a\varphi(y), -(1 - a)^{-1} \psi(y)\big\rangle = 0
\\&\hspace{8cm}\mbox{and $f$ is orthogonally $a$-Jensen}\Big)
\\&= a \Big[f\big(\varphi(x)\big) - (1 - a) f(0) + f\big(a^{-1}(1 - a) \psi(x)\big) - a f(0)\Big]
\\&\hspace{1cm}+ (1 - a) \Big[f\big((1 - a)^{-1}a \varphi(y)\big) - (1 - a) f(0) + f\big(-\psi(y)\big) - a f(0)\Big]
\\&\hspace{8cm}\Big(\mbox{by Lemma \ref{lm.201}\,(i) and (ii)}\Big)
\\&= a\Big[f\big(\varphi(x)\big) - (1 - a) f(0) + a^{-1} f\big((1 - a) \psi(x)\big)
\\&\hspace{5cm}- a^{-1}(1 - a) f(0) - a f(0)\Big]
\\&\hspace{3cm}+ (1 - a) \Big[(1 - a)^{-1} f\big(a \varphi(y)\big) - (1 - a)^{-1}a f(0)
\\&\hspace{6cm} - (1 - a) f(0) + f\big(-\psi(y)\big) - a f(0)\Big]
\\&\hspace{8cm}\Big(\mbox{by Lemma \ref{lm.201}\,(iii) and (iv)}\Big)
\\&= a \Big[f\big(\varphi(x)\big) - (1 - a) f(0) + f(0) + a^{-1}(1 - a) f\big(\psi(x)\big)
\\&\hspace{7cm} - (1 - a)a^{-1} f(0) - a f(0)\Big]
\\&\hspace{2cm} + (1 - a) \Big[(1 - a)^{-1}a f\big(\varphi(y)\big) + f(0)
\\&\hspace{4.5cm} - (1 - a)^{-1}a f(0) - (1 - a) f(0) + f\big(\psi(-y)\big) - a f(0)\Big]
\\&\hspace{8cm}\Big(\mbox{by Lemma \ref{lm.201}\,(v) and (vi)}\Big).
\end{align*}
From this it follows that
\begin{align*}
a f\big(\varphi(x) + \varphi(y)\big) &+ (1 - a) f\big(\psi(x) -\psi(y)\big)
\\&= a \Big[f\big(\varphi(x)\big) + a^{-1}(1 - a) f\big(\psi(x)\big) - (1 - a)a^{-1} f(0)\Big]
\\&\hspace{2cm} + (1 - a) \Big[(1 - a)^{-1}a f\big(\varphi(y)\big) - (1 - a)^{-1}a f(0) + f\big(\psi(-y)\big)\Big]
\end{align*}
and the lemma is proved.
\end{proof}
%%%%%%%%%%%%%%%%%%%%%%%%%%%%%%%%%%%%%%%%
\begin{remark}
The condition that additive mappings $\varphi, \psi$ satisfying $\big\langle \varphi(x), \psi(y)\big\rangle=0$
and $a \big\langle \varphi(x), \varphi(y)\big\rangle a^\ast = (1 - a) \big\langle \psi(x), \psi(y)\big\rangle (1 - a)^\ast$
is not restrictive. In fact, there are non-trivial concrete examples of additive mappings satisfying this condition.
A non-trivial example can be given in $\ell^2$ by $a = 1-p$ with $p\in(0, 1)$ and
$$\begin{cases}
\varphi, \psi:\ell^2\longrightarrow \ell^2 &\\
\varphi(\{a_n\}) = \big(\frac{1}{1-p}a_1, 0, \frac{1}{1-p}a_2, 0, \frac{1}{1-p}a_3, 0,\cdots\big) &\\
\psi(\{a_n\}) = \big(0, \frac{1}{p}a_1, 0, \frac{1}{p}a_2, 0, \frac{1}{p}a_3, 0,\cdots\big).
\end{cases}$$
One can easily observe that $\big\langle \varphi(\{a_n\}), \psi(\{b_n\})\big\rangle=0$
and
$$a \big\langle \varphi(\{a_n\}), \varphi(\{b_n\})\big\rangle a^\ast
= (1 - a) \big\langle \psi(\{a_n\}), \psi(\{b_n\})\big\rangle (1 - a)^\ast = \sum_{n = 1}^\infty a_n b_n.$$
\end{remark}
%%%%%%%%%%%%%%%%%%%%%%%%%%%%%%%%%%%%%
The following auxiliary results are needed in our investigation.
%%%%%%%%%%%%%%%%%%%%%%%%%%%%%%%%%%%%%%%%
\begin{proposition}\label{pr.203}
Suppose that there exist additive mappings $\varphi, \psi:\mathscr{F}\longrightarrow \mathscr{E}$
such that $\big\langle \varphi(z), \psi(w)\big\rangle=0$ and
$a \big\langle \varphi(z), \varphi(w)\big\rangle a^\ast = (1 - a) \big\langle \psi(z), \psi(w)\big\rangle (1 - a)^\ast$
for all $z, w\in \mathscr{F}$. If $f:\mathscr{E}\longrightarrow \mathscr{G}$
is an odd orthogonally $a$-Jensen mapping,
then $f$ is additive on $\mathscr{K} := \varphi(\mathscr{F})+\psi(\mathscr{F})$.
\end{proposition}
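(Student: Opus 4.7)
The plan is to use Lemma~\ref{lm.202} under the oddness hypothesis to obtain additivity of $f$ on each of $\varphi(\mathscr{F})$ and $\psi(\mathscr{F})$ separately, and then to obtain the cross identity $f(\varphi(x)+\psi(y))=f(\varphi(x))+f(\psi(y))$ by applying the orthogonally $a$-Jensen condition not to the pair $(\varphi(x),\psi(y))$ itself but to the rescaled pair $\bigl(a^{-1}\varphi(x),(1-a)^{-1}\psi(y)\bigr)$. As a preliminary, note that oddness gives $f(0)=0$, so parts (v) and (vi) of Lemma~\ref{lm.201} collapse to the scaling identities $f(ay)=af(y)$ and $f((1-a)y)=(1-a)f(y)$ for every $y\in\mathscr{E}$; replacing $y$ by $a^{-1}y$ and by $(1-a)^{-1}y$ then also yields $f(a^{-1}y)=a^{-1}f(y)$ and $f((1-a)^{-1}y)=(1-a)^{-1}f(y)$.

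For the individual additivities, substituting $f(0)=0$ and $f(\psi(-y))=-f(\psi(y))$ into Lemma~\ref{lm.202} collapses its right-hand side and gives
\begin{align*}
a f\bigl(\varphi(x)+\varphi(y)\bigr)+(1-a)f\bigl(\psi(x)-\psi(y)\bigr)=a\bigl[f(\varphi(x))+f(\varphi(y))\bigr]+(1-a)\bigl[f(\psi(x))-f(\psi(y))\bigr].
\end{align*}
Swapping $x$ and $y$ in this identity, and using oddness to rewrite $f(\psi(y)-\psi(x))$ as $-f(\psi(x)-\psi(y))$, produces a companion identity. Adding the two cancels the $(1-a)$-terms, and invertibility of $a$ then yields $f(\varphi(x)+\varphi(y))=f(\varphi(x))+f(\varphi(y))$; subtracting the two cancels the $a$-terms and, after the substitution $y\mapsto -y$, yields $f(\psi(x)+\psi(y))=f(\psi(x))+f(\psi(y))$.

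For the cross term, the hypothesis $\langle\varphi(x),\psi(y)\rangle=0$ together with the $\mathfrak{A}$-sesquilinearity of the inner product gives $\langle a^{-1}\varphi(x),(1-a)^{-1}\psi(y)\rangle=0$, so applying the orthogonally $a$-Jensen condition to this rescaled pair and then the scaling identities from the first paragraph,
\begin{align*}
f\bigl(\varphi(x)+\psi(y)\bigr)&=f\bigl(a\cdot a^{-1}\varphi(x)+(1-a)\cdot(1-a)^{-1}\psi(y)\bigr)\\
&=af\bigl(a^{-1}\varphi(x)\bigr)+(1-a)f\bigl((1-a)^{-1}\psi(y)\bigr)=f(\varphi(x))+f(\psi(y)).
\end{align*}
Combining this cross-additivity with the two individual additivities and with the additivity of $\varphi$ and $\psi$, for arbitrary $u=\varphi(x_1)+\psi(y_1)$ and $v=\varphi(x_2)+\psi(y_2)$ in $\mathscr{K}$ one writes $f(u+v)=f(\varphi(x_1+x_2)+\psi(y_1+y_2))$ and then distributes by cross-additivity and individual additivity to recover $f(u)+f(v)$, giving additivity on $\mathscr{K}$. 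The only genuine effort is concentrated in checking that Lemma~\ref{lm.202} telescopes as claimed under oddness and that the swap/add (respectively swap/subtract) manipulation leaves exactly one invertible coefficient on each side; once individual additivity is in hand, the cross step is essentially automatic from orthogonally $a$-Jensen applied to the rescaled pair.
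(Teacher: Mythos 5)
Your proof is correct and follows essentially the same route as the paper: oddness collapses Lemma~\ref{lm.202}, the swap-and-add step gives additivity on $\varphi(\mathscr{F})$, and the cross identity via the rescaled orthogonal pair $\bigl(a^{-1}\varphi(x),(1-a)^{-1}\psi(y)\bigr)$ together with Lemma~\ref{lm.201}(i),(ii) yields additivity on all of $\mathscr{K}$. The only (welcome) difference is that you make the ``similarly'' for $\psi(\mathscr{F})$ explicit by subtracting the swapped identity and invoking the invertibility of $1-a$.
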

\begin{proof}
Since $f$ is odd $f(0)=0$. Thus for every $x, y\in \mathscr{F}$, by Lemma \ref{lm.202} we conclude that
\begin{multline}\label{id.204}
a f\big(\varphi(x) + \varphi(y)\big) + (1 - a) f\big(\psi(x)-\psi(y)\big) =
\\a f\big(\varphi(x)\big) + (1 - a) f\big(\psi(x)\big) + a f\big(\varphi(y)\big) + (1 - a) f\big(-\psi(y)\big).
\end{multline}
Switching $x$ and $y$ in (\ref{id.204}) we obtain
\begin{multline}\label{id.205}
a f\big(\varphi(y) + \varphi(x)\big) + (1 - a) f\big(\psi(y)-\psi(x)\big) =
\\a f\big(\varphi(y)\big) + (1 - a) f\big(\psi(y)\big) + a f\big(\varphi(x)\big) + (1 - a) f\big(-\psi(x)\big).
\end{multline}
Add (\ref{id.204}) and (\ref{id.205}) and use the fact that $f$ is odd to get
\begin{align*}
2a f\big(\varphi(x) + \varphi(y)\big) = 2a f\big(\varphi(x)\big) + 2a f\big(\varphi(y)\big),
\end{align*}
or equivalently,
\begin{align*}
f\big(\varphi(x)+\varphi(y)\big)=f\big(\varphi(x)\big)+f\big(\varphi(y)\big).
\end{align*}
Hence $f$ is additive on $\varphi(\mathscr{F})$. Similarly $f$ is additive on $\psi(\mathscr{F})$.
Now for every $z_1, z_2\in \mathscr{K}$ there exist $x_1, x_2, y_1, y_2\in \mathscr{F}$ such that
$$z_1=\varphi(x_1)+\psi(y_1) \quad \mbox{and} \quad z_2=\varphi(x_2)+\psi(y_2).$$
We have
\begin{align*}
f(z_1+z_2)&= f\big(\varphi(x_1+x_2)+\psi(y_1+y_2)\big)
\\&=f\Big(aa^{-1} \varphi(x_1+x_2) + (1 - a)(1 - a)^{-1}\psi(y_1+y_2)\Big)
\\&= a f\big(a^{-1} \varphi(x_1+x_2)\big) + (1 - a) f\big((1 - a)^{-1} \psi(y_1+y_2) \big)
\\&\hspace{2cm}\Big(\mbox{since}\,\,\big\langle a^{-1} \varphi(x_1+x_2), (1 - a)^{-1} \psi(y_1+y_2)\big\rangle=0\,\,
\\&\hspace{7cm}\mbox{and $f$ is orthogonally $a$-Jensen}\Big)
\\&=f\big(\varphi(x_1+x_2)\big)+f\big(\psi(y_1+y_2)\big)\hspace{2cm}\Big(\mbox{by Lemma \ref{lm.201}\,(i) and (ii)}\Big)
\\&= f\big(\varphi(x_1)\big)+f\big(\varphi(x_2)\big)+f\big(\psi(y_1)\big)+f\big(\psi(y_2)\big)
\\&\hspace{5cm}\Big(\mbox{by the additivity of $f$ on $\varphi(\mathscr{F})$ and $\psi(\mathscr{F})$}\Big)
\\&= a f\big(a^{-1} \varphi(x_1)\big)+ (1 - a) f\big((1 - a)^{-1} \psi(y_1)\big)
\\&\hspace{3cm}+ a f\big(a^{-1} \varphi(x_2)\big) + (1 - a) f\big((1 - a)^{-1}\psi(y_2)\big)
\\&\hspace{8cm}\Big(\mbox{by Lemma \ref{lm.201}\,(i) and (ii)}\Big)
\\&= f\big(\varphi(x_1)+\psi(y_1)\big)+f\big(\varphi(x_2)+\psi(y_2)\big)
\\&\hspace{3cm}\Big(\mbox{since}\,\,\big\langle a^{-1} \varphi(x_1), (1 - a)^{-1}\psi(y_1)\big\rangle = 0,
\\&\hspace{5cm}\big\langle a^{-1}\varphi(x_2), (1 - a)^{-1}\psi(y_2)\big\rangle = 0
\\&\hspace{7cm}\mbox{and $f$ is orthogonally $a$-Jensen}\Big)
\\&= f(z_1)+f(z_2).
\end{align*}
Thus $f$ is additive on $\mathscr{K}$.
\end{proof}
%%%%%%%%%%%%%%%%%%%%%%%%%%%%%%%%%%%%%
\begin{proposition}\label{pr.210}
Suppose that there exist additive mappings $\varphi, \psi:\mathscr{F}\longrightarrow \mathscr{E}$
such that $\big\langle \varphi(z), \psi(w)\big\rangle=0$ and
$a \big\langle \varphi(z), \varphi(w)\big\rangle a^\ast = (1 - a) \big\langle \psi(z), \psi(w)\big\rangle (1 - a)^\ast$
for all $z, w\in \mathscr{F}$. If $f:\mathscr{E}\longrightarrow \mathscr{G}$ is an even orthogonally
$a$-Jensen mapping such that $f(0)=0$, then f is quadratic on $\mathscr{K} := \varphi(\mathscr{F})+\psi(\mathscr{F})$.
\end{proposition}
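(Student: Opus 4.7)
The plan is to reduce the statement to showing that $g := f\circ\varphi$ and $h := f\circ\psi$ are quadratic on $\mathscr{F}$, and then reassemble the result on $\mathscr{K}$ using the orthogonal additivity of $f$ on $\mathscr{E}$. For brevity set $\alpha := a^{-1}(1-a)$ and $\beta := (1-a)^{-1}a$; these satisfy $\alpha\beta = \beta\alpha = 1$, and $g,h$ are even with $g(0)=h(0)=0$.

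Applying Lemma~\ref{lm.202} with $f(0)=0$ and replacing $f(\psi(-y))$ by $f(\psi(y))$ via the additivity of $\psi$ and evenness of $f$, the stated identity simplifies (using $a\alpha = 1-a$ and $(1-a)\beta = a$) to
\[
ag(x+y) + (1-a)h(x-y) = ag(x) + (1-a)h(x) + ag(y) + (1-a)h(y). \qquad (\ast)
\]
Replacing $y$ by $-y$ in $(\ast)$ and again invoking the additivity of $\varphi,\psi$ and the evenness of $f$ yields the companion identity
\[
ag(x-y) + (1-a)h(x+y) = ag(x) + (1-a)h(x) + ag(y) + (1-a)h(y). \qquad (\ast\ast)
\]
Setting $y=x$ in $(\ast)$ and in $(\ast\ast)$ and using $g(0)=h(0)=0$ produces the doubling formulas
\[
g(2x) = 2g(x) + 2\alpha h(x), \qquad h(2x) = 2h(x) + 2\beta g(x).
\]

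The crucial point is that $\alpha\beta = 1$ forces the collapse $h(2x) = \beta g(2x)$: substituting the two doubling formulas gives
\[
\beta g(2x) - h(2x) = 2\beta g(x) + 2\beta\alpha h(x) - 2\beta g(x) - 2h(x) = 2(\beta\alpha - 1)h(x) = 0.
\]
Since $2 = 1+1$ is a positive invertible element of the unital $C^*$-algebra $\mathfrak{A}$, every $y\in\mathscr{F}$ is of the form $2(y/2)$, and we conclude $h(y) = \beta g(y)$ for all $y\in\mathscr{F}$. Substituting $h = \beta g$ back into $(\ast)$ and using $(1-a)\beta = a$ collapses the equation to $a[g(x+y)+g(x-y)] = 2ag(x)+2ag(y)$; multiplying by $a^{-1}$ yields the quadratic functional equation for $g$, and the identity $h = \beta g$ transfers it at once to $h$.

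To finish on $\mathscr{K}$, I observe that $f$ is orthogonally additive on $\mathscr{E}$: Lemma~\ref{lm.201}(i),(ii) with $f(0)=0$ give $f(au) = af(u)$ and $f((1-a)v) = (1-a)f(v)$, and the substitution $u\mapsto a^{-1}u,\ v\mapsto (1-a)^{-1}v$ in (\ref{id.200}) yields $f(u+v) = f(u)+f(v)$ whenever $\langle u,v\rangle = 0$. Writing $z_i = \varphi(x_i)+\psi(y_i)\in\mathscr{K}$ and using $\varphi(\cdot)\perp\psi(\cdot)$, orthogonal additivity gives $f(z_i) = g(x_i)+h(y_i)$ and $f(z_1\pm z_2) = g(x_1\pm x_2) + h(y_1\pm y_2)$; adding the quadratic identities for $g$ and $h$ then delivers $f(z_1+z_2)+f(z_1-z_2) = 2f(z_1)+2f(z_2)$. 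The main obstacle I anticipate is that the $x\leftrightarrow y$ swap used in Proposition~\ref{pr.203} returns $(\ast)$ unchanged in the even case, so one cannot proceed by symmetrization as in the odd argument; the missing ingredient must come from the algebraic rigidity $a\langle\varphi,\varphi\rangle a^\ast = (1-a)\langle\psi,\psi\rangle(1-a)^\ast$, which is exactly what produces the collapse $h = \beta g$ via $\alpha\beta = 1$.
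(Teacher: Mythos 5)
Your proposal is correct and follows essentially the same route as the paper: both derive the key collapse $a f(\varphi(x)) = (1-a) f(\psi(x))$ (your $h=\beta g$) from the doubling formulas obtained by setting $y=\pm x$ in Lemma~\ref{lm.202}, substitute it back to get the quadratic equation on $\varphi(\mathscr{F})$ and $\psi(\mathscr{F})$, and then assemble on $\mathscr{K}$ via orthogonal additivity as in Proposition~\ref{pr.203}. Your write-up merely makes explicit the final step that the paper compresses into ``by the same reasoning as in the last part of Proposition~\ref{pr.203}.''
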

\begin{proof}
Since $f$ is even and $f(0)=0$, putting $x=y$ in Lemma \ref{lm.202} we infer that
$$a f\big(2\varphi(x)\big) = 2a f\big(\varphi(x)\big) + 2(1 - a) f\big(\psi(x)\big) \,\qquad (x\in \mathscr{F}).$$
Similarly, we have
$$(1 - a) f\big(2\psi(x)\big) = 2a f\big(\varphi(x)\big) + 2(1 - a) f\big(\psi(x)\big) \,\qquad (x\in \mathscr{F}).$$
Therefore, we conclude that
\begin{align}\label{id.211}
a f\big(2\varphi(x)\big) = (1 - a) f\big(2\psi(x)\big) \,\qquad (x\in \mathscr{F}).
\end{align}
If we put $\frac{x}{2}$ instead of $x$ in (\ref{id.211}) we get
\begin{align}\label{id.212}
a f\big(\varphi(x)\big) = (1 - a) f\big(\psi(x)\big) \,\qquad (x\in \mathscr{F}).
\end{align}
Now for every $x, y\in \mathscr{F}$ we have
\begin{align*}
f\big(\varphi(x)&+\varphi(y)\big) + f\big(\varphi(x)-\varphi(y)\big)
\\&= f\big(\varphi(x)+\varphi(y)\big) + a^{-1}(1 - a) f\big(\psi(x)-\psi(y)\big) \hspace{4.2cm}\Big(\mbox{by}\,(\ref{id.212})\Big)
\\&= f\big(\varphi(x)\big) + a^{-1}(1 - a) f\big(\psi(x)\big) + f\big(\varphi(y)\big) + a^{-1}(1 - a) f\big(\psi(y)\big)
\\& \hspace{11cm}\Big(\mbox{by Lemma}\,\,\ref{lm.202}\Big)
\\&= f\big(\varphi(x)\big) + f\big(\varphi(x)\big) + f\big(\varphi(y)\big) + f\big(\varphi(y)\big)
\hspace{4.9cm}\Big(\mbox{by}\,\,(\ref{id.212})\Big)
\\&= 2f\big(\varphi(x)\big) + 2f\big(\varphi(y)\big).
\end{align*}
Thus
\begin{align*}
f\big(\varphi(x)+\varphi(y)\big)+f\big(\varphi(x)-\varphi(y)\big)=2f\big(\varphi(x)\big)+2f\big(\varphi(y)\big).
\end{align*}
So $f$ is quadratic on $\varphi(\mathscr{F})$.
Similarly $f$ is quadratic on $\psi(\mathscr{F})$.

By the same reasoning as in the last part of Proposition \ref{pr.203}
we conclude that $f$ is quadratic on $\mathscr{K}$.
\end{proof}
%%%%%%%%%%%%%%%%%%%%%%%%%%%%%%%%%%
We are now in position to establish the main result.
If $A:\mathscr{E}\longrightarrow \mathscr{G}$ is $a$-additive and
$B:\mathscr{E}\times \mathscr{E}\longrightarrow \mathscr{G}$ is
$a$-biadditive orthogonality preserving, then the mapping
$f:\mathscr{E}\longrightarrow \mathscr{G}$ defined by
$$f(x) = A(x) + B(x, x) + f(0) \qquad (x\in \mathscr{E})$$
is an orthogonally $a$-Jensen mapping. Namely, if $\langle x, y\rangle=0$ then
\begin{align*}
f\big(ax + (1 - a)y\big)&= A\big(ax + (1 - a)y\big) + B\big(ax + (1 - a)y, ax + (1 - a)y\big) + f(0)
\\&= a A(x ) + a B(x, x) + a f(0)
\\&\hspace{4cm}+ (1 - a) A(y) + (1 - a)B(y, y) + (1 - a) f(0)
\\&= a f(x) + (1 - a) f(x).
\end{align*}
The following theorem is a kind of converse of the previous discussion.
%%%%%%%%%%%%%%%%%%%%%%%%%%%%%%%%%%%%%
\begin{theorem}\label{th.217}
Let $a$ be an element of a unital $C^{*}$-algebra $\mathfrak{A}$
such that $a, 1-a$ are invertible and let $\mathscr{E}, \mathscr{F}, \mathscr{G}$
be inner product $\mathfrak{A}$-modules. Suppose that there exist additive mappings
$\varphi, \psi:\mathscr{F}\longrightarrow \mathscr{E}$ such that
$\big\langle \varphi(z), \psi(w)\big\rangle=0$ and
$a \big\langle \varphi(z), \varphi(w)\big\rangle a^\ast = (1 - a) \big\langle \psi(z), \psi(w)\big\rangle (1 - a)^\ast$
for all $z, w\in \mathscr{F}$. Let $\mathscr{K} := \varphi(\mathscr{F})+\psi(\mathscr{F})$.
If $f:\mathscr{E}\longrightarrow \mathscr{G}$ is an orthogonally $a$-Jensen mapping,
then there exist unique mappings $A:\mathscr{E}\longrightarrow \mathscr{G}$ and
$B:\mathscr{E}\times \mathscr{E}\longrightarrow \mathscr{G}$ such that
$A$ is $a$-additive on $\mathscr{K}$, $B$ is symmetric $a$-biadditive orthogonality preserving on
$\mathscr{K}\times \mathscr{K}$ and
$$f(x) = A(x) + B(x, x) + f(0) \qquad (x\in \mathscr{K}).$$
Moreover, $B(x,y)=\frac{1}{8} \Big(f(x+y)+f(-x - y) - f(x-y) -f(-x + y)\Big)$
and $A(x)=\frac{1}{2} \Big(f(x)-f(-x)\Big)$ for every $x, y \in \mathscr{E}$.
\end{theorem}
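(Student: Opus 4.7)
The strategy is a standard odd/even decomposition of $f$ combined with Propositions~\ref{pr.203} and~\ref{pr.210}. First I would set
$$A(x) := \tfrac{1}{2}\bigl(f(x)-f(-x)\bigr), \qquad g(x) := \tfrac{1}{2}\bigl(f(x)+f(-x)\bigr) - f(0),$$
so that $f(x)=A(x)+g(x)+f(0)$ on all of $\mathscr{E}$. Because $\langle x,y\rangle=0$ if and only if $\langle -x,-y\rangle=0$, applying (\ref{id.200}) to the pairs $(x,y)$ and $(-x,-y)$ and then taking respectively the difference and the sum shows that $A$ is an odd orthogonally $a$-Jensen mapping and that $g$ is an even orthogonally $a$-Jensen mapping with $g(0)=0$.

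Proposition~\ref{pr.203} then yields that $A$ is additive on $\mathscr{K}$, and Proposition~\ref{pr.210} yields that $g$ is quadratic on $\mathscr{K}$. The identity $A(ax)=aA(x)$ is obtained from (\ref{id.200}) with $y=0$, which gives $f(ax)=af(x)+(1-a)f(0)$, subtracting the analogous relation for $-x$; this provides the required $a$-additivity of $A$ on $\mathscr{K}$ (and in fact on all of $\mathscr{E}$). Next I would introduce the polarization
$$B(x,y) := \tfrac{1}{4}\bigl(g(x+y)-g(x-y)\bigr),$$
which, once $g$ is expanded in terms of $f$, coincides with $\tfrac{1}{8}\bigl(f(x+y)+f(-x-y)-f(x-y)-f(-x+y)\bigr)$ as in the statement. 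The classical polarization argument takes $g$ quadratic on the submodule $\mathscr{K}$ to $B$ symmetric and biadditive on $\mathscr{K}\times\mathscr{K}$ with $B(x,x)=g(x)$. Together with the preceding step this yields the representation $f(x)=A(x)+B(x,x)+f(0)$ for $x\in\mathscr{K}$.

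What remains is to upgrade $B$ to an $a$-biadditive, orthogonality-preserving map. For $a$-biadditivity I would combine $f(ax)=af(x)+(1-a)f(0)$ with the symmetric identity $f((1-a)x)=af(0)+(1-a)f(x)$ (obtained from (\ref{id.200}) with $x=0$) to deduce $g(ax)=ag(x)$ and $g((1-a)x)=(1-a)g(x)$; these immediately give $B(ax,ax)=aB(x,x)$ and $B((1-a)x,(1-a)x)=(1-a)B(x,x)$. For orthogonality preserving, if $\langle x,y\rangle=0$ then $\langle \varepsilon_1 a^{-1}x,\,\varepsilon_2(1-a)^{-1}y\rangle=0$ for each choice of signs $\varepsilon_1,\varepsilon_2\in\{\pm 1\}$, so (\ref{id.200}) writes each of the four values $f(\pm x\pm y)$ as an $x$-piece plus a $y$-piece, and the alternating sum defining $B(x,y)$ then cancels termwise. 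Uniqueness is a routine odd/even argument applied to the difference of any two such decompositions.

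The main obstacle is nothing conceptually deep; it is the careful bookkeeping of the four $f(\pm x\pm y)$ terms in the orthogonality-preserving step, together with the need to verify that the Jensen hypothesis genuinely applies after the rescalings by $a^{-1}$ and $(1-a)^{-1}$. Since these rescalings do not disturb the vanishing of the inner product, the verification is direct but deserves attention.
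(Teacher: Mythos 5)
Your proposal is correct and follows essentially the same route as the paper: the odd/even decomposition, the appeal to Propositions~\ref{pr.203} and~\ref{pr.210}, the polarization $B(x,y)=\tfrac14\bigl(f_e(x+y)-f_e(x-y)\bigr)$, and the identities $f(ax)=af(x)+(1-a)f(0)$, $f((1-a)x)=af(0)+(1-a)f(x)$ (which are items of Lemma~\ref{lm.201}). The only cosmetic difference is that you verify orthogonality preservation by cancelling the four terms $f(\pm x\pm y)$ directly, whereas the paper first reduces $B(x,y)$ to $\tfrac12\bigl(f(x+y)-f(x)-f(y)\bigr)$; both verifications are valid.
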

%%%%%%%%%%%%%%%%%%%%%%%%%%%%%%%%%%%%%%
\begin{proof}
By passing to $f-f(0)$, if necessary, we may assume that $f(0)=0$.
We decompose $f$ into its even and odd parts by
$$f_o(x) =\frac{1}{2}\big(f(x)-f(-x)\big) \quad \mbox{and} \quad f_e(x) =\frac{1}{2}\big(f(x)+f(-x)\big)$$
for all $x\in \mathscr{E}$.
Set $A(x) := f_o(x)$ and $B(x, y) :=\frac{1}{4}\big(f_e(x+y)-f_e(x-y)\big)$.
It is easy to show that $f_o$ is odd orthogonally $a$-Jensen.
So by Proposition \ref{pr.203}, $f_o$ is additive on $\mathscr{K}$.
Also, by Lemma \ref{lm.201} (v), we have
$$A(ax) = f_o(ax) = a f_o(x) + b f_o(0) = a A(x) \qquad (x\in \mathscr{K}).$$
Hence $A$ is $a$-additive on $\mathscr{K}$.
Furthermore, $f_e$ is even orthogonally $a$-Jensen.
Since $f_e(0)= f(0)-f_o(0)=0$, $f_e$ is quadratic on $\mathscr{K}$ by Proposition \ref{pr.210}.
Thus $f_e(2x)=4f_e(x)$ and so,
$$B(x, x)=\frac{1}{4}\big(f_e(2x)-f_e(0)\big)=f_e(x).$$
This implies
$$f(x)=f_o(x)+f_e(x)=A(x)+B(x, x) \qquad (x\in \mathscr{K}).$$
For each $x, y, z\in \mathscr{E}$ we have
\begin{align}\label{id.218}
B(x+y, 2z)&= \frac{1}{4}\Big(f_e(x+y+2z)-f_e(x+y-2z)\Big)\nonumber
\\&=\frac{1}{4}\Big(f_e\big((x+z)+(y+z)\big)+f_e\big((x+z)-(y+z)\big)\nonumber
\\&\hspace{2cm}-f_e\big((x-z)+(y-z)\big)-f_e\big((x-z)-(y-z)\big)\Big)\nonumber
\\&=\frac{1}{4}\Big(2f_e(x+z)+2f_e(y+z)-2f_e(x-z)-2f_e(y-z)\Big)\nonumber
\\&=\frac{1}{2}\Big(f_e(x+z) - f_e(x-z) + f_e(y+z) - f_e(y-z)\Big)\nonumber
\\&=2B(x, z)+2B(y, z).
\end{align}
In particular, by choosing $y=0$, we get
\begin{align}\label{id.219}
B(x, 2z)&=2B(x, z)+2B(0, z)\nonumber
\\&=2B(x, z)+\frac{1}{4}\Big(f(x)+f(-x)-f(x)-f(-x)\Big)\nonumber
\\&=2B(x, z).
\end{align}
If we replace $x$ by $x+y$ in (\ref{id.219}), then by (\ref{id.218}) we obtain
$$B(x+y, z)=\frac{1}{2}B(x+y, 2z)=\frac{1}{2}[2B(x, z)+2B(y, z)]=B(x, z)+B(y, z),$$
and similarly $B(x, y+z)=B(x, y)+B(x, z)$. Therefore $B$ is biadditive on $\mathscr{K}\times \mathscr{K}$.
Also, by Lemma \ref{lm.201} (vi), we have
$$B(ax, ax) = f_e(ax) = a f_e(x) + (1 - a) f_e(0) = a B(x, x) \qquad (x\in \mathscr{K})$$
and analogously
$$B((1 - a)x, (1 - a)x) = (1 - a) B(x, x) \qquad (x\in \mathscr{K}).$$
Hence, $B$ is $a$-biadditive on $\mathscr{K}\times \mathscr{K}$.
Further, for each $x, y\in \mathscr{K}$, it follows from $\langle x, y\rangle=0$ that
\begin{align*}
B(x, y)&=\frac{1}{2}\big(B(x, y)+B(y, x)\big)
\\&=\frac{1}{2}\big(B(x+y, x+y)-B(x, x)-B(y, y)\big)
\\&=\frac{1}{2}\big(f(x+y)-A(x+y)\big)-\frac{1}{2}\big(f(x)-A(x)\big)-\frac{1}{2}\big(f(y)-A(y)\big)
\\&=\frac{1}{2}\big(f(x+y)-f(x)-f(y)\big) \hspace{4.8cm}\Big(\mbox{$A$ is additive on $\mathscr{K}$}\Big)
\\&=\frac{1}{2}\Big(f\big(aa^{-1} x + (1 - a)(1 - a)^{-1} y \big)-f(x)-f(y)\Big)
\\&=\frac{1}{2}\Big(a f(a^{-1} x) + (1 - a) f((1 - a)^{-1}y)\Big) - \frac{1}{2}f(x)-\frac{1}{2}f(y)
\\&\hspace{3.1cm}\Big(\mbox{since}\,\,\big\langle a^{-1} x, (1 - a)^{-1}y\big\rangle = 0 \,\,\mbox{and $f$ is orthogonally $a$-Jensen}\Big)
\\&=\frac{1}{2}f(x)+\frac{1}{2}f(y)-\frac{1}{2}f(x)-\frac{1}{2}f(y)\hspace{3cm}\Big(\mbox{by Lemma \ref{lm.201}\,(i) and (ii)}\Big)
\\&=0.
\end{align*}
Also, since $f_e$ is even, $B$ is symmetric.
Thus $B$ is $a$-biadditive orthogonality preserving on $\mathscr{K}\times \mathscr{K}$.
Finally suppose, $f(x) = A_1(x)+B_1(x,x)+f(0) = A_2(x)+B_2(x,x)+f(0)$ for any $x$ for
the specified kind of mappings $A$ and $B$. Hence, $A_1(x)-A_2(x)=B_1(x,x)-B_2(x,x)$ for any $x$.
However, the left part is an odd mapping, and the right part is an even mapping.
So both these terms are equal to zero for any $x$.
Thus we conclude that $A$ and $B$ are uniquely determined by $f$.
\end{proof}
%%%%%%%%%%%%%%%%%%%%%%%%%%%%%%%%%%%%%%%%%%%%
\begin{remark}
The $a$-additive mappings $\varphi$, $\psi$ from $\mathscr {F}$ to $\mathscr {E}$ need not to be injective.
Also, the linear span of their ranges need not coincide with $\mathscr {E}$.
So, $a$ and $1-a$ might be assumed merely to admit generalized inverses inside the $C^*$-algebra $\mathfrak A$.
By the requested equality
$a \big\langle \varphi(z), \varphi(w) \big\rangle a^* = (1-a) \big\langle \psi(z), \psi(w) \big\rangle (1-a)^*$
for any $z, w \in \mathscr {F}$ the range projections of $a$ and of $1-a$ in the bidual von Neumann algebra
${\mathfrak A}^{**}$ of $\mathfrak A$ have to coincide.
The domain projections of $a$ and of $1-a$ in the bidual von Neumann algebra
${\mathfrak A}^{**}$ of $\mathfrak A$ have to majorize the support projection of the subset
$\big\langle \varphi(\mathscr {F}), \varphi(\mathscr {F})\big\rangle$ and of the subset
$\big\langle \psi(\mathscr {F}), \psi(\mathscr {F})\big\rangle$ in ${\mathfrak A}^{**}$, respectively.
For $a$ and $1-a$ admitting generalized inverses in $\mathfrak A$ the domain
and the range projections belong to ${\mathfrak A} \subseteq {\mathfrak A}^{**}$.
However, they might not belong to the center of ${\mathfrak A}$,
so the Hilbert $C^*$-modules $\mathscr {F}$ and $\mathscr {E}$
cannot be reduced appropriately compatible with their module structure, in general.
\end{remark}
%%%%%%%%%%%%%%%%%%%%%%%%%%%%%%%%%%
In the following result we obtain the representation of orthogonally
$p$-Jensen mappings in inner product modules.
%%%%%%%%%%%%%%%%%%%%%%%%%%%%%%%%%%%%%%%
\begin{corollary}\label{cr.216}
Let $p\in(0, 1)$ be rational. Suppose that there exist
additive mappings $\varphi, \psi:\mathscr{F}\longrightarrow \mathscr{E}$
such that $\big\langle \varphi(z), \psi(w)\big\rangle=0$
and $(1-p)^2\big\langle \varphi(z), \varphi(w)\rangle = p^2\big\langle \psi(z), \psi(w)\big\rangle$
for all $z, w\in \mathscr{F}$. If $f:\mathscr{E}\longrightarrow \mathscr{G}$ is orthogonally $p$-Jensen,
then there exists a unique mapping $A:\mathscr{E}\longrightarrow \mathscr{G}$
such that $A$ is additive on $\mathscr{K} := \varphi(\mathscr{F})+\psi(\mathscr{F})$ and
$$f(x) = A(x) + f(0) \qquad (x\in \mathscr{K}).$$
\end{corollary}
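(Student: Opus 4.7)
The plan is to invoke Theorem \ref{th.217} with the scalar choice $a := p \cdot 1 \in \mathfrak{A}$ and then to exploit the rationality of $p$ to eliminate the quadratic term. Because $p \in \mathbb{R}$, one has $a^* = a = p$ and $1-a = 1-p$, so after interchanging the roles of $\varphi$ and $\psi$ (permissible since $\langle \psi(z), \varphi(w)\rangle = \langle \varphi(w), \psi(z)\rangle^{*} = 0$) the corollary's hypothesis $(1-p)^2 \langle \varphi,\varphi\rangle = p^2 \langle \psi,\psi\rangle$ becomes exactly the hypothesis $a\langle \varphi,\varphi\rangle a^* = (1-a)\langle \psi,\psi\rangle(1-a)^*$ of Theorem \ref{th.217}. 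Noting that an orthogonally $p$-Jensen $f$ is the same as an orthogonally $a$-Jensen mapping for $a=p\cdot 1$, Theorem \ref{th.217} produces an additive $A \colon \mathscr{E} \to \mathscr{G}$ and a symmetric biadditive orthogonality preserving $B \colon \mathscr{E}\times\mathscr{E} \to \mathscr{G}$ with
\[
f(x) = A(x) + B(x,x) + f(0) \qquad (x \in \mathscr{K}),
\]
where moreover $A$ is $a$-additive and $B$ is $a$-biadditive on the appropriate domains.

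The decisive step is to force $B(x,x) = 0$ on $\mathscr{K}$. Pure $\mathbb{Z}$-biadditivity automatically makes the diagonal form $x \mapsto B(x,x)$ homogeneous of degree $2$ over $\mathbb{Q}$: for any rational $r = m/n$ the identity $n^{2} B(rx, rx) = B(mx, mx) = m^{2} B(x,x)$ yields $B(rx, rx) = r^{2} B(x,x)$. Specializing to $r = p$ gives $B(px, px) = p^{2} B(x,x)$. On the other hand, the $a$-biadditivity clause of Theorem \ref{th.217} with $a = p\cdot 1$ yields $B(px, px) = p\, B(x,x)$. Comparing,
\[
p(p-1)\, B(x,x) = 0 \qquad (x \in \mathscr{K}),
\]
and since $p \in (0,1)$ makes $p(p-1)$ a nonzero (hence invertible) complex scalar, we conclude $B(x,x) = 0$ throughout $\mathscr{K}$. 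Standard polarization finishes: symmetry together with biadditivity gives $0 = B(x+y, x+y) = B(x,x) + 2B(x,y) + B(y,y) = 2B(x,y)$, hence $B \equiv 0$ on $\mathscr{K}\times\mathscr{K}$.

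What remains is cosmetic. We are left with $f(x) = A(x) + f(0)$ on $\mathscr{K}$ with $A$ additive, and uniqueness of $A$ is inherited from Theorem \ref{th.217}; indeed, the explicit formula $A(x) = \tfrac{1}{2}\bigl(f(x) - f(-x)\bigr)$ pins $A$ down on all of $\mathscr{E}$. I do not foresee any genuine obstacle: the conceptual content is precisely the clash between the $p$-homogeneity of $B$ on the diagonal (imposed by $a$-biadditivity) and the automatic $p^2$-homogeneity (from $\mathbb{Z}$-biadditivity), a clash available only because $p$ is rational. The only delicate point is the orientation bookkeeping matching the corollary's hypothesis with that of Theorem \ref{th.217}, which is handled by the one interchange of $\varphi$ and $\psi$ noted above.
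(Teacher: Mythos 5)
Your proposal is correct and follows essentially the same route as the paper: invoke Theorem \ref{th.217} with $a=p1$ and then annihilate $B$ on the diagonal by playing the degree-one $p$-homogeneity forced by the Jensen condition against the degree-two $\mathbb{Q}$-homogeneity that biadditivity gives for rational $p$ (the paper produces the clash by applying Lemma \ref{lm.201}(i) to $f$, yielding $(1-\tfrac{1}{p})B(x,x)=0$, while you read it off directly from the $a$-biadditivity of $B$, yielding $p(p-1)B(x,x)=0$ --- the same mechanism). Your observation that the corollary's hypothesis $(1-p)^2\langle\varphi,\varphi\rangle=p^2\langle\psi,\psi\rangle$ matches the theorem's hypothesis only after interchanging $\varphi$ and $\psi$ (legitimate since $\mathscr{K}$ is symmetric in the two maps) is a genuine point of care that the paper's proof passes over in silence.
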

%%%%%%%%%%%%%%%%%%%%%%%%%%%%%%%%%%%
\begin{proof}
By Theorem \ref{th.217}, there exist unique mappings $A:\mathscr{E}\longrightarrow \mathscr{G}$
and $B:\mathscr{E}\times \mathscr{E}\longrightarrow \mathscr{G}$ such that
$A$ is additive on $\mathscr{K}$, $B$ is symmetric biadditive orthogonality
preserving on $\mathscr{K}\times \mathscr{K}$ and
$$f(x) = A(x) + B(x, x) +f(0) \qquad (x\in \mathscr{K}).$$
We have
\begin{align*}
A(x) + B(x, x) &+f(0) = f(x)
\\&= (1-p) f(0) + p f(\frac{1}{p} x)\hspace{5cm}\Big(\mbox{by Lemma}\,\ref{lm.201}\Big)
\\&= (1-p) f(0) + p \Big( A(\frac{1}{p} x) + B(\frac{1}{p} x,\frac{1}{p} x)+f(0)\Big)
\hspace{1cm}\Big(\mbox{by Theorem}\,\ref{th.217}\Big)
\\&=A(x)+\frac{1}{p}B(x,x)+f(0),\hspace{0.5cm}\Big(\mbox{by $\mathbb Q$-linearity A and $\mathbb{Q}$-bilinearity B}\Big)
\end{align*}
for any $x \in \mathscr{K}$. Consequently, $(1-\frac{1}{p}) B(x,x)=0$
and therefore, $B(x,x)=0$ for all $x \in \mathscr{K}$.
Thus $B=0$ on $\mathscr{K}$ and $f(x) = A(x) + f(0)$ on $\mathscr{K}$.

Note that if $A(x)=0$ for some $x \in \mathscr{K}$ then by $\mathbb{Q}$-linearity of
$A$ we reach $A(qx) = 0$ for all rational numbers $q$.
So, $f(qx) = A(qx) + f(0) = f(0)$ for all rational numbers $q$.
\end{proof}
%%%%%%%%%%%%%%%%%%%%%%%%%%%%%%%
\begin{corollary}\label{cr.217}
Let $\mathscr{F}$ be a submodule of $\mathscr{E}$,
and $\varphi:\mathscr{F}\longrightarrow \mathscr{E}$ be a morphism such that
$\varphi(\mathscr{F} )\subseteq \mathscr{F}^\perp$.
If $f:\mathscr{E}\longrightarrow \mathscr{G}$ is orthogonally Jensen,
then there exists a unique mapping $A:\mathscr{E}\longrightarrow \mathscr{G}$
such that $A$ is additive on $\mathscr{K} := \mathscr{F}\oplus\varphi(\mathscr{F})$ and
$$f(x) = A(x) + f(0) \qquad (x\in \mathscr{K}).$$
\end{corollary}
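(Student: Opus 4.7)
The plan is to deduce this corollary as a direct specialization of Corollary \ref{cr.216} applied with $p=\tfrac12$ (so $a=\tfrac12\cdot 1$ and $1-a=\tfrac12\cdot 1$, which are indeed invertible and make $f$ an orthogonally $p$-Jensen mapping). The additive maps to feed into the corollary will be the inclusion of $\mathscr{F}$ into $\mathscr{E}$ and the given morphism $\varphi$.

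Concretely, I would set $\tilde{\varphi},\tilde{\psi}\colon\mathscr{F}\longrightarrow\mathscr{E}$ by $\tilde{\varphi}(x)=x$ and $\tilde{\psi}(x)=\varphi(x)$. Both are additive: the inclusion trivially, and $\varphi$ because a morphism of inner product $\mathfrak{A}$-modules is in particular $\mathfrak{A}$-linear. I then verify the two hypotheses of Corollary \ref{cr.216}. For the orthogonality condition, $\langle \tilde{\varphi}(z),\tilde{\psi}(w)\rangle=\langle z,\varphi(w)\rangle=0$ for all $z,w\in\mathscr{F}$, since $\varphi(w)\in\varphi(\mathscr{F})\subseteq\mathscr{F}^{\perp}$ while $z\in\mathscr{F}$. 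For the norming condition, with $p=\tfrac12$ we have $(1-p)^{2}=p^{2}=\tfrac14$, so the required identity $(1-p)^{2}\langle\tilde{\varphi}(z),\tilde{\varphi}(w)\rangle=p^{2}\langle\tilde{\psi}(z),\tilde{\psi}(w)\rangle$ reduces to $\langle z,w\rangle=\langle\varphi(z),\varphi(w)\rangle$, which is precisely the defining property of $\varphi$ being a morphism.

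Having verified the hypotheses, Corollary \ref{cr.216} produces a unique additive map $A\colon\mathscr{E}\longrightarrow\mathscr{G}$ on $\tilde{\varphi}(\mathscr{F})+\tilde{\psi}(\mathscr{F})$ such that $f(x)=A(x)+f(0)$ there. To finish I only need to identify this submodule with $\mathscr{K}=\mathscr{F}\oplus\varphi(\mathscr{F})$: clearly $\tilde{\varphi}(\mathscr{F})+\tilde{\psi}(\mathscr{F})=\mathscr{F}+\varphi(\mathscr{F})$, and since $\varphi(\mathscr{F})\subseteq\mathscr{F}^{\perp}$ we have $\mathscr{F}\cap\varphi(\mathscr{F})\subseteq\mathscr{F}\cap\mathscr{F}^{\perp}=\{0\}$, so the sum is (orthogonally) direct. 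Uniqueness of $A$ is inherited from Corollary \ref{cr.216}.

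There is no genuine obstacle here; the only points demanding minor care are the automatic additivity of the morphism $\varphi$ and the observation that the arithmetic compatibility condition of Corollary \ref{cr.216} collapses at $p=\tfrac12$ to the very definition of a morphism of inner product $\mathfrak{A}$-modules. Thus the corollary is essentially a packaging result, specializing the general representation theorem to the geometrically natural situation in which one of the additive maps is the inclusion of a submodule and the other is an isometric embedding of that submodule into its orthogonal complement.
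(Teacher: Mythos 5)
Your proposal is correct and follows essentially the same route as the paper: take the inclusion of $\mathscr{F}$ and the morphism $\varphi$ as the two additive mappings, note that the orthogonality hypothesis follows from $\varphi(\mathscr{F})\subseteq\mathscr{F}^\perp$ and that at $p=\tfrac12$ the compatibility condition of Corollary \ref{cr.216} reduces to the morphism identity $\langle\varphi(z),\varphi(w)\rangle=\langle z,w\rangle$, and then apply Corollary \ref{cr.216}. Your additional remarks (automatic additivity of the morphism, directness of the sum $\mathscr{F}\oplus\varphi(\mathscr{F})$) are correct fillings-in of details the paper leaves implicit.
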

%%%%%%%%%%%%%%%%%%%%%%%%%%%%%%%%%%%
\begin{proof}
Let $id:\mathscr{F}\longrightarrow \mathscr{F}$ be the
identity mapping. Since $\varphi$ is a morphism and $\varphi(\mathscr{F} )\subseteq \mathscr{F}^\perp$,
for every $x, y\in \mathscr{F}$ we obtain
$\big\langle \varphi(x), \varphi(y)\big\rangle = \big\langle id(x), id(y)\big\rangle$
and $\big\langle \varphi(x), id(y)\big\rangle=0$. It remains to apply Corollary \ref{cr.216} for $p=\frac{1}{2}$.
\end{proof}
%%%%%%%%%%%%%%%%%%%%%%%%%%%%%%%%%%%%%%
\begin{corollary}\label{co.218}
Let $\mathscr{F}$ be a fully complemented submodule of $\mathscr{E}$.
If $f:\mathscr{E}\longrightarrow \mathscr{G}$ be orthogonally Jensen,
then there exists a unique mapping $A:\mathscr{E}\longrightarrow \mathscr{G}$
such that $A$ is additive on $\mathscr{F}$ and
$f(x) = A(x) + f(0)$ for every $x\in \mathscr{F}$.
\end{corollary}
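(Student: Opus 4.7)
The plan is to reduce Corollary~\ref{co.218} to Corollary~\ref{cr.217} by producing a morphism $\varphi:\mathscr{F}\longrightarrow \mathscr{E}$ whose image lies in $\mathscr{F}^\perp$; once such a $\varphi$ is in hand, Corollary~\ref{cr.217} applied to it will supply a unique $A:\mathscr{E}\longrightarrow \mathscr{G}$, additive on $\mathscr{K}:=\mathscr{F}\oplus \varphi(\mathscr{F})$, with $f(x)=A(x)+f(0)$ for every $x\in\mathscr{K}$. Restricting to $\mathscr{F}\subseteq \mathscr{K}$ will then give the claim.

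To construct $\varphi$, I would exploit the two ingredients packaged into the fully-complemented hypothesis. First, orthogonal complementedness gives $\mathscr{E}=\mathscr{F}\oplus\mathscr{F}^\perp$, so $\mathscr{F}^\perp$ is genuinely a closed submodule of $\mathscr{E}$. Second, $\mathscr{F}^\perp\thicksim\mathscr{E}$ produces an adjointable $u:\mathscr{F}^\perp\longrightarrow \mathscr{E}$ with $u^*u=id_{\mathscr{F}^\perp}$ and $uu^*=id_{\mathscr{E}}$, and in particular $u^*:\mathscr{E}\longrightarrow\mathscr{F}^\perp$ is $\mathfrak{A}$-linear. I would then set $\varphi:=u^*|_{\mathscr{F}}:\mathscr{F}\longrightarrow \mathscr{F}^\perp\subseteq\mathscr{E}$. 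The map $\varphi$ is additive, by construction $\varphi(\mathscr{F})\subseteq \mathscr{F}^\perp$, and a short computation using $uu^*=id_{\mathscr{E}}$ shows
$$\langle \varphi(x),\varphi(y)\rangle=\langle u^*x,u^*y\rangle=\langle uu^*x,y\rangle=\langle x,y\rangle$$
for all $x,y\in \mathscr{F}$, so $\varphi$ is a morphism.

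With this $\varphi$ in place, Corollary~\ref{cr.217} immediately yields a unique $A:\mathscr{E}\longrightarrow\mathscr{G}$, additive on $\mathscr{F}\oplus\varphi(\mathscr{F})$, with $f=A+f(0)$ on that sum; restricting to $\mathscr{F}$ finishes the existence part, and uniqueness of $A$ on $\mathscr{F}$ is trivial, since any two candidates $A_1,A_2$ satisfy $A_1(x)=f(x)-f(0)=A_2(x)$ for all $x\in\mathscr{F}$. The only genuine step in the argument is extracting the morphism $\varphi$ from the unitary equivalence $\mathscr{F}^\perp\thicksim\mathscr{E}$; I do not anticipate any further obstacle, since Corollary~\ref{cr.217} does all the remaining work.
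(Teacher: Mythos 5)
Your proposal is correct and follows essentially the same route as the paper: extract from the unitary equivalence $\mathscr{F}^\perp\thicksim\mathscr{E}$ an adjointable map onto $\mathscr{F}^\perp$ preserving inner products (your $u^*$ is exactly the paper's $\phi$), restrict it to $\mathscr{F}$ to get the required morphism $\varphi$ with $\varphi(\mathscr{F})\subseteq\mathscr{F}^\perp$, and invoke Corollary~\ref{cr.217}. No substantive difference.
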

%%%%%%%%%%%%%%%%%%%%%%%%%%%%%%%%%
\begin{proof}
Since $\mathscr{F}$ is a fully complemented submodule of $\mathscr{E}$,
therefore $\mathscr{F}\oplus \mathscr{F}^\perp=\mathscr{E}$
and $\mathscr{F}^\perp\thicksim \mathscr{E}$. So, there exists an adjointable mapping
$\phi:\mathscr{E}\longrightarrow \mathscr{F}^\perp$ such that $\phi^* \phi=id_\mathscr{E}$.
We have $\big\langle \phi(x), \phi(y)\big\rangle = \big\langle \phi^* \phi(x), y\big\rangle = \big\langle x, y\big\rangle$
for all $x, y\in \mathscr{E}$. Let us define $\varphi := \phi|_\mathscr{F}$ .
Then $\varphi(\mathscr{F} )\subseteq \mathscr{F}^\perp$ and
$\big\langle \varphi(x), \varphi(y)\big\rangle = \big\langle x, y\big\rangle$
for all $x, y\in \mathscr{F}$. Thus $\varphi:\mathscr{F}\longrightarrow \mathscr{E}$
is a morphism such that $\varphi(\mathscr{F} )\subseteq \mathscr{F}^\perp$.
Now the statement follows from Corollary \ref{cr.217}.
\end{proof}
%%%%%%%%%%%%%%%%%%%%%%%%%%%%%%%%%%%%%%%%%%
\textbf{Acknowledgement.}
The author would like to thank the referee for her/his valuable suggestions and comments.
He would also like to thank Professor M. S. Moslehian and Professor M. Frank for their 
invaluable suggestions while writing this paper.
%%%%%%%%%%%%%%%%%%%%%%%%%%%%%%%%%%
\bibliographystyle{amsplain}

\end{document}